\numberwithin{equation}{section}
\newtheorem{Theorem}{Theorem}[section]
\newtheorem{Lemma}[Theorem]{Lemma}
\theoremstyle{definition}
\newtheorem*{Example}{Example}
\theoremstyle{remark}
\newtheorem*{remark}{Remark}
\title[proof of Rose's conjecture]{Proof of conjecture regarding the level of  Rose's generalized sum-of-divisor functions}
\author[Hannah Larson]{Hannah Larson}
\address{Department of Mathematics, Harvard University, Cambridge, MA 02138}
\email{hannahlarson@college.harvard.edu}
\begin{document}

\maketitle

\begin{abstract}
In a recent paper, Rose proves that certain generalized sum-of-divisor functions are quasi-modular forms for some congruence subgroup and conjectures that these forms are quasi-modular for $\Gamma_1(n)$. Here, we prove this conjecture.
\end{abstract}

\section{Introduction}

In \cite{R}, Rose studies generalizations of MacMahon's sum-of-divisor functions and shows that they are quasi-modular forms. For the context and role of these functions, we refer the reader to this paper and references cited therein.
Rose's generalized functions are defined with respect to symmetric sets $S \subset \{1, \ldots, n\}$ of residues modulo $n$ satisfying the property that for all $\ell \in S$, we also have $- \ell \in S \pmod n$. For such a set $S$ and any positive integer $k$, define
\begin{align*}
A_{S,n,k}(q) := \sum_{\substack{0 < m_1 < \cdots < m_k \\ m_i \in S \pmod n}} \frac{q^{m_1+\ldots+m_k}}{(1-q^{m_1})^2 \cdots (1 - q^{m_k})^2}.
\end{align*}
Rose proves that $A_{S,n,k}(q)$ is a quasi-modular form of mixed weight for some congruence subgroup $\Gamma \subseteq \mathrm{SL}_2(\mathbb{Z})$ and conjectures that $A_{S,n,k}(q)$ is actually quasi-modular for $\Gamma_1(n)$. We prove that this is indeed the case. More precisely, our result is the following.

\begin{Theorem} \label{con}
The functions $A_{S,n,k}(q)$ are quasi-modular forms of mixed weight at most $2k$ for $\Gamma_1(n)$.
\end{Theorem}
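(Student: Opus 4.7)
My plan is to decompose $A_{S,n,k}$ via symmetric-function machinery into Eisenstein-type building blocks, each of which is (quasi-)modular for $\Gamma_1(n)$. Set $f_m(q) := q^m/(1-q^m)^2$. The strict inequalities in the definition of $A_{S,n,k}$ make it the $k$-th elementary symmetric polynomial in the (infinite) family $\{f_m : m > 0,\ m \in S \pmod{n}\}$. By Newton's identities it is a $\mathbb{Q}$-polynomial in the power sums
\[
p_j(q) := \sum_{\substack{m > 0 \\ m \in S \pmod{n}}} f_m(q)^j = \sum_{\substack{m > 0 \\ m \in S \pmod{n}}} \frac{q^{jm}}{(1-q^m)^{2j}}, \qquad 1 \leq j \leq k,
\]
and these identities preserve the grading in which $p_j$ has weight $2j$. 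Since quasi-modular forms on $\Gamma_1(n)$ are closed under multiplication with weights adding, it suffices to show each $p_j$ is quasi-modular of weight at most $2j$ for $\Gamma_1(n)$; the product then has mixed weight at most $2k$.

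To analyze $p_j$, I would expand geometrically: from $(1-x)^{-2j} = \sum_{r\ge 0}\binom{r+2j-1}{2j-1}x^r$ one obtains
\[
\frac{q^{jm}}{(1-q^m)^{2j}} = \sum_{d \ge 1} Q_j(d)\, q^{dm}, \qquad Q_j(d) := \binom{d+j-1}{2j-1}.
\]
A direct check shows $Q_j$ is an odd polynomial in $d$ of degree $2j-1$ (its roots are $0, \pm 1, \ldots, \pm(j-1)$), hence a $\mathbb{Q}$-linear combination of $d, d^3, \ldots, d^{2j-1}$. Summing over the allowed $m$ yields
\[
p_j(q) = \sum_{i=1}^{j} c_{j,i}\, \mathcal{E}_i^{S,n}(q), \qquad \mathcal{E}_i^{S,n}(q) := \sum_{\substack{m > 0 \\ m \in S \pmod{n}}}\sum_{d\ge 1} d^{2i-1}\, q^{dm},
\]
for certain $c_{j,i} \in \mathbb{Q}$, reducing the problem to proving each $\mathcal{E}_i^{S,n}$ is (quasi-)modular of weight $2i$ on $\Gamma_1(n)$.

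For this final step I would use additive Fourier inversion on $\mathbb{Z}/n\mathbb{Z}$ to write the indicator function of $S$ as a $\mathbb{C}$-linear combination of the additive characters $a \mapsto e^{2\pi i s a/n}$ ($s \in \mathbb{Z}/n$); the hypothesis $S = -S$ makes the coefficients real and symmetric in $s \leftrightarrow -s$. Then $\mathcal{E}_i^{S,n}$ becomes a combination of the twisted series $\sum_{m\ge 1}\zeta_n^{sm}\sum_{d\ge 1}d^{2i-1}q^{dm}$ (with $\zeta_n := e^{2\pi i/n}$), which are classically the Fourier expansions at $\infty$ of weight-$2i$ Eisenstein series for the principal congruence subgroup $\Gamma(n)$ (with the usual quasi-modular correction when $i=1$). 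Because these are integer-power $q$-series, the resulting functions are automatically $T$-invariant for $T = \bigl(\begin{smallmatrix}1&1\\0&1\end{smallmatrix}\bigr)$, and since $\Gamma_1(n) = \langle \Gamma(n), T\rangle$, this upgrades $\Gamma(n)$-modularity to $\Gamma_1(n)$-modularity. The main obstacle is precisely this last step — matching each $\mathcal{E}_i^{S,n}$ to a classical Eisenstein series on $\Gamma(n)$ and handling the weight-$2$ quasi-modular completion — while the preceding symmetric-function and binomial calculations are purely algebraic.
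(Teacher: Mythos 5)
Your argument is essentially correct, but it takes a genuinely different route from the paper. The paper does not touch the $q$-series directly: it takes as input Rose's theorem that the pure-weight pieces of $A_{S,n,k}$ are Taylor coefficients at $z=0$ of the Jacobi form $\phi_S(n\tau,z)=\prod_{\ell\in S\setminus n}\psi_{\alpha(n,\ell)}(n\tau,z)$ (plus an eta-quotient factor when $n\in S$), and then pins down the level by computing, via the Jacobi group law, exactly which matrices fix $\phi_S$ (Lemmas \ref{3.1} and \ref{2}) and finishing with an index count showing the resulting group is all of $\Gamma_1(n)$. You instead bypass Jacobi forms entirely: the identification of $A_{S,n,k}$ with $e_k(\{f_m\})$, the reduction via Newton's identities to the power sums $p_j$, and the expansion $q^{jm}/(1-q^m)^{2j}=\sum_d\binom{d+j-1}{2j-1}q^{dm}$ with $\binom{d+j-1}{2j-1}$ an odd polynomial in $d$ are all correct and purely algebraic, and they reduce the theorem to the quasi-modularity on $\Gamma_1(n)$ of the twisted divisor sums $\mathcal{E}_i^{S,n}$; your specialization at $k=1$, $n=5$ in fact reproduces the paper's example table exactly. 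What your route buys is an elementary, self-contained argument with explicit Eisenstein-series formulas for the graded pieces; what it costs is that the entire weight of the proof lands on the step you flag as the "main obstacle," which you assert rather than prove. That step is classical but needs care on two points: the twist $\zeta_n^{sm}$ sits on the \emph{complementary} divisor $m=N/d$ rather than on $d$, so you must use the correct member of the two dual families of level-$n$ Eisenstein series (or pass through a Fricke involution) when matching Fourier expansions; and for $i=1$ you need that the non-holomorphic $\tfrac{1}{y}$-corrections to the weight-$2$ series are independent of the twist, so that $\mathcal{E}_1^{S,n}$ is quasi-modular of depth at most one. Neither point is an error, but until they are written out your proof is an outline of a correct alternative rather than a complete one.
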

 
 We present an example of the forms that arise when $n=5$ and $k=1$.
 
 \begin{Example}
By Theorem \ref{con}, the forms $A_{S,5,1}(q)$ are quasi-modular forms of mixed weight at most $2$ on $\Gamma_1(5)$. If we let 
$\chi$ be the principal Dirichlet character modulo $5$ and $\psi= \left(\frac{\bullet}{5}\right)$, this space is spanned by constants and the Eisenstein series
\begin{align*}
G_2(q) &= -\frac{B_2}{4} + \sum_{m=1}^\infty \left(\sum_{n \mid m} n\right)q^m = -\frac{1}{24}+q+3q^2+4q^3+7q^4+6q^5+\ldots \\
G_2(q^5) &= -\frac{1}{24}+q^5+3q^{10}+4q^{15}+7q^{20}+\ldots 
\end{align*}
\begin{align*}
G_{2,\chi,\psi}(q) &=-\frac{B_{2,\psi}}{4}+ \sum_{m=1}^\infty \left(\sum_{n \mid m} \psi(n) \chi\left(\tfrac{m}{n}\right) n\right)q^m=
-\frac{1}{5} +q-q^2-2q^3+3q^4+q^5+\ldots \\
G_{2,\psi,\chi}(q) &= \sum_{m=1}^\infty\left(\sum_{n\mid m} \chi(n)\psi\left(\tfrac{m}{n}\right)n\right)q^m=
q+q^2+2q^3+3q^4+5q^5+\ldots.
\end{align*}
The following table lists an expression for $A_{S,5,1}(q)$ in terms of these Eisenstein series for each symmetric set $S$ modulo $5$.

\vspace{.05in}
\begin{center}
\begin{tabular}{|l|l|}
\hline
\rule{0pt}{2.5ex}$S$ & $A_{S,5,1}(q)$ \\[0.5ex] 
\hline
\hline
\rule{0pt}{2.5ex}$\{5\}$ & $G_2(q^5)+\frac{1}{24}$ \\ [0.5ex]
\hline
\rule{0pt}{2.5ex}$\{1,4\}$ & $\frac{1}{2}G_2(q) - \frac{1}{2}G_2(q^5) + \frac{1}{2}G_{2,\psi,\chi}(q)$ \\ [0.5ex]
\hline
\rule{0pt}{2.5ex}$\{1,4,5\}$ & $\frac{1}{2}G_2(q) + \frac{1}{2}G_2(q^5) + \frac{1}{2}G_{2,\psi,\chi}(q)+\frac{1}{24}$ \\[0.5ex]
\hline
\rule{0pt}{2.5ex}$\{2,3\}$ & $\frac{1}{2}G_2(q) -\frac{1}{2}G_2(q^5) -\frac{1}{2}G_{2,\psi,\chi}(q)$\\ [0.5ex]
\hline
\rule{0pt}{2.5ex}$\{2,3,5\}$ & $\frac{1}{2}G_2(q) +\frac{1}{2}G_2(q^5) -\frac{1}{2}G_{2,\psi,\chi}(q) + \frac{1}{24}$\\[0.5ex]
\hline
\rule{0pt}{2.5ex}$\{1,2,3,4\}$ & $G_2(q) - G_2(q^5)$ \\ [0.5ex]
\hline
\rule{0pt}{2.5ex}$\{1,2,3,4,5\}$ & $G_2(q) + \frac{1}{24}$ \\ [0.5ex]
\hline
\end{tabular}
\end{center}
 \end{Example}
 
 \begin{remark}
 The constant summands above are permitted as the weight-$0$ components of $A_{S, n, k}(q)$.
 \end{remark}
 
Rose proves that the $A_{S,n,k}(q)$ are quasi-modular by relating them to the Taylor coefficients of a particular Jacobi form. Since the Taylor coefficients of a Jacobi form are quasi-modular for the same group, we study the modularity properties of this Jacobi form to deduce the desired properties of $A_{S,n,k}(q)$. In Section \ref{jf}, we review the theory of Jacobi forms as in \cite{EZ} and recall Rose's results in terms of these functions. Then in Section \ref{pf}, we study the modularity properties of the Jacobi forms and nearly-holomorphic modular forms that arise in Rose's work to prove the conjecture.

\subsection*{Acknowledgements}
This research was carried out during the 2015 REU at Emory University. The author would like to thank Ken Ono for pointing out Rose's recent work, Michael Mertens, Sarah Trebat-Leader, Eric Larson, and Michael Griffin  for useful conversations, and the NSF for its support. The author also thanks an anonymous referee for helpful comments on an earlier version of this paper.

\section{Jacobi forms and Rose's work} \label{jf}

Given a matrix $\gamma = \left(\begin{smallmatrix} a&b\\c&d \end{smallmatrix}\right) \in \mathrm{SL}_2(\mathbb{Z})$, the weight-$k$ \textit{slash operator} acts on functions on the upper half-plane  $\mathbb{H}$ by
\begin{equation}
(f\vert_k\gamma)(\tau)  := (c\tau+d)^{-k}f\left(\frac{a\tau+b}{c\tau+d}\right).
\end{equation}
A \textit{nearly-holomorphic modular form} of weight $k$ on a subgroup $\Gamma \subseteq \mathrm{SL}_2(\mathbb{Z})$ is a function $f:\mathbb{H}\rightarrow \mathbb{C}$ which satisfies $(f\vert_k\gamma)(\tau) = f(\tau)$ for all $\gamma \in \Gamma$ and is
expressible as a polynomial in $\frac{1}{y}$, where $y := \mathrm{Im}(\tau)$,
with holomorphic coefficients. A \textit{quasi-modular form} is the constant term, with respect to $\frac{1}{y}$, of a nearly-holomorphic modular form.

The \textit{Jacobi group} is the set of triples 
\[G^J := \left\{[M,X,\xi] \in \mathrm{SL}_2(\mathbb{R}) \times \mathbb{R}^2 \times \mathbb{C} : |\xi| = 1\right\},\] under the group law
\begin{equation}\label{gplaw}
\left[\begin{array}{c}\\ \end{array} \! \! \! \! \! M,X,\xi\right]\left[M',X',\xi'\right] = \left[MM', XM'+X',\xi\xi' \cdot e\left(\det\left(\begin{matrix} XM' \\ X' \end{matrix}\right) \right)\right],
\end{equation}
where $e(x) := e^{2\pi i x}$.
Similar to the slash operator, given integers $k$ and $m$, the Jacobi group acts on the space of holomorphic functions $\phi: \mathbb{H}\times \mathbb{C} \rightarrow \mathbb{C}$ by
\begin{align}
&\left({\!\!\!\! \!\left.\begin{array}{c} \\[1.94ex] \end{array} \phi\right|}_{k,m} \left[ \left(\begin{matrix} a & b \\ c& d\end{matrix}\right), (\lambda, \mu), \xi\right]\right)(\tau,z) \notag \\
&\qquad \qquad :=\xi^m(c\tau+d)^{-k} e\left(\frac{-cm(z+\lambda\tau+\mu)^2}{c\tau+d}+m(\lambda^2\tau+2\lambda z+\lambda\mu) \right)\\
&\qquad\qquad \qquad \times \phi\left(\frac{a\tau+b}{c\tau+d},\frac{z+\lambda\tau+\mu}{c\tau+d}\right). \notag
\end{align}
 (See Theorem 1.4 of \cite{EZ}). We often drop the subscripts $k$ and $m$ when they are clear from context.
To simplify notation, we write $I = \left(\begin{smallmatrix} 1 & 0 \\ 0 & 1 \end{smallmatrix}\right)$ for the identity matrix, $[M]$ for $[M, (0,0),1],$ and $[X]$ for $[I, X,1]$.
A \textit{Jacobi form} of weight $k$ and index $m$ for a subgroup $\Gamma \subseteq \mathrm{SL}_2(\mathbb{Z})$ is a holomorphic function $\phi: \mathbb{H} \times \mathbb{C} \rightarrow \mathbb{C}$ which satisfies
\[(\phi|_{k,m}[M])(\tau,z) = \phi(\tau,z)\]
for all $M \in \Gamma$ and
\[(\phi|_{k,m}[X])(\tau,z) = \phi(\tau, z)\]
for all $X \in \mathbb{Z}^2$.

\begin{remark}
One also requires certain growth conditions at the cusps. We refer the reader to \cite{EZ} for more details on Jacobi forms.
\end{remark}

The \textit{Jacobi theta function}, defined by
\begin{equation}
\vartheta(\tau,z) := \sum_{n \in \mathbb{Z}}e\left(\tfrac{1}{2}n^2\tau + n z\right) = \sum_{n \in \mathbb{Z}}q^{\frac{1}{2}n^2}\zeta^n,
\end{equation}
where $q:=e^{2\pi i\tau}$ and $\zeta := e^{2\pi i z}$, is a Jacobi form of weight $\frac{1}{2}$ and index $\frac{1}{2}$
for the subgroup $\left\langle \left(\begin{smallmatrix} 1 & 2 \\ 0 & 1 \end{smallmatrix}\right), \left(\begin{smallmatrix} 0 & -1 \\ 1 & 0 \end{smallmatrix}\right) \right\rangle\subset \mathrm{SL}_2(\mathbb{Z}).$ 
We note that
\begin{equation*}
\left(\vartheta|\left[\left(\begin{smallmatrix} 1 & 1 \\ 0 & 1 \end{smallmatrix}\right)\right]\right)(\tau, z) = \sum_{n \in \mathbb{Z}} (-1)^nq^{\frac{1}{2}n^2}\zeta^n = \left(\vartheta|\left[\left(0, \tfrac{1}{2}\right)\right]\right)(\tau, z).
\end{equation*}
In particular, the above implies that the function defined by
\begin{equation}
\psi(\tau,z) := \frac{\vartheta(\tau,z)}{\vartheta(\tau,0)}
\end{equation}
is a Jacobi form of weight $0$ and index $\frac{1}{2}$ on the subgroup 
$\langle \left(\begin{smallmatrix} 1 & 2 \\ 0 & 1  \end{smallmatrix}\right), \left(\begin{smallmatrix}0 & -1 \\ 1 & 0 \end{smallmatrix}\right)\rangle \subset \mathrm{SL}_2(\mathbb{Z})$
which satisfies
\begin{equation} \label{tr}
\left(\psi |\left[\left(\begin{smallmatrix} 1 & 1 \\ 0 & 1 \end{smallmatrix}\right)\right]\right)(\tau, z) = \left(\psi|\left[\left(0, \tfrac{1}{2}\right)\right]\right)(\tau, z).
\end{equation}

As in \cite{R}, let
\begin{equation}
\vartheta_r(\tau, z) := \left(\vartheta|\left[\left(r, 0\right)\right] \right)(\tau,z)=\sum_{m \in \mathbb{Z}+r} q^{\frac{1}{2}m^2}\zeta^m.
\end{equation}
Similarly, define
\begin{equation}
\psi_r(\tau,z) := \left(\psi|\left[\left(r, \tfrac{1}{2}\right)\right] \right)(\tau,z) = \frac{\vartheta_r(\tau, z+\tfrac{1}{2})}{\vartheta_r(\tau, \tfrac{1}{2})} = \frac{\vartheta_r(q, -\zeta)}{\vartheta_r(q,-1)},
\end{equation}
where the last term is written to match Rose's notation.
In addition, as in \cite{R}, let
\[\alpha(n, \ell) := \frac{\ell}{n} - \frac{1}{2},\]
let $D$ be the differential operator $\frac{1}{2\pi i} \frac{d}{d\tau} = q \frac{d}{dq}$, and let
\[\eta(\tau) = q^{\frac{1}{24}}\prod_{m=1}^\infty (1 - q^m)\]
be the \textit{Dedekind eta-function}.
Given a symmetric set $S \subset \{1, \ldots, n\}$, the pure-weight parts of $A_{S,n,k}(q)$ are expressible in terms of  the Jacobi forms
\[\phi_S(\tau, z) := \prod_{\ell \in S\backslash n}\psi_{\alpha(n, \ell)}(\tau, z).\]
With this notation, we can restate the relevant parts of Rose's results as follows (see proofs of Theorems 1.11 and 1.12 in \cite{R}).
\begin{Theorem}[Rose]
Let $S \subset \{1, \ldots, n\}$ be a symmetric set of residues modulo $n$. The function $A_{S,n,k}(q)$ is a quasi-modular form of mixed weight at most $2k$.
In particular, when $n \notin S$, the weight $2w$ part of $A_{S,n,k}(q)$ is a multiple of
\[\left(\frac{\partial}{\partial z}\right)^{2w}\!\!\!\!\!\!\left.\begin{array}{c} \\[2ex] \end{array} \phi_S(n\tau,z)\right|_{z=0},\]
and when $n \in S$, the weight $2w$ part is a multiple of
\[\sum_{i=1}^w{2w+1 \choose 2i+1} \left(\frac{2}{n}\right)^{i} \frac{D^{i}\eta(n\tau)^3}{\eta(n\tau)^3} \cdot \left(\frac{\partial}{\partial z}\right)^{2w-2i}\!\!\!\!\!\!\left.\begin{array}{c} \\[2ex] \end{array} \phi_S(n\tau,z)\right|_{z=0}.\]
\end{Theorem}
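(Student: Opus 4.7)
My plan is to identify $A_{S,n,k}(q)$ with a combination of Taylor coefficients at $z=0$ of the Jacobi form $\phi_S(n\tau,z)$ (together with an eta-function factor when $n \in S$), and then to invoke the general principle that Taylor coefficients of Jacobi forms at $z=0$ are quasi-modular for the same congruence subgroup.

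To execute this in the case $n \notin S$, I would first Taylor-expand each factor $\psi_{\alpha(n,\ell)}(n\tau,z)$, for $\ell \in S$, about $z=0$. Using the Jacobi triple product expansion of $\vartheta$, one can show
\[
\log \psi_{\alpha(n,\ell)}(n\tau,z) = \sum_{j \geq 1} c_j\, P_j(\ell,\tau)\, z^{2j},
\]
where $c_j$ is a universal numerical constant and $P_j(\ell,\tau)$ is a divisor-type power sum over positive integers $m \equiv \ell \,(\mathrm{mod}\, n)$; in particular $P_1(\ell,\tau)$ is proportional to $\sum_{m>0,\, m \equiv \ell \,(\mathrm{mod}\, n)} q^{m}/(1-q^m)^2$. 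Exponentiating and taking the product over $\ell \in S$, the coefficient of $z^{2w}$ in $\phi_S(n\tau,z)$ becomes a polynomial in these power sums; by Newton's identities this rearranges into a combination of the elementary symmetric functions $A_{S,n,k'}(q)$ for $k' \le w$, and inverting the resulting triangular relation yields the claimed pure-weight-$2w$ identification.

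For the case $n \in S$, the extra element $\ell = n$ corresponds to $\alpha(n,n) = 1/2$, and $\psi_{1/2}(n\tau,z)$ can be rewritten via the Jacobi triple product as a multiple of $\vartheta(n\tau,z)/\eta(n\tau)^3$. Its Taylor expansion introduces the logarithmic-derivative terms $D^i \eta(n\tau)^3/\eta(n\tau)^3$, and the binomial coefficients $\binom{2w+1}{2i+1}$ in the statement arise from applying $\partial_z^{2w}$ via the Leibniz rule to the product of this new factor with $\phi_S(n\tau,z)$ (which by definition already omits $\ell = n$); the factors $(2/n)^i$ account for the rescaling $\tau \mapsto n\tau$ and for the conversion between $\partial_z^{2j}\big|_{z=0}$ and the operator $D$.

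Finally, I would invoke the standard result from \cite{EZ} that the Taylor coefficients at $z=0$ of a Jacobi form of weight $k$ and index $m$ on a congruence subgroup $\Gamma$ are quasi-modular forms of weights $k$, $k+2$, $k+4$, \ldots\ on the same $\Gamma$. Since $\phi_S(n\tau,z)$ is a finite product of the Jacobi forms $\psi_r$, it is itself Jacobi for some congruence subgroup; multiplication by $\vartheta(n\tau,z)/\eta(n\tau)^3$ in the $n \in S$ case preserves this property. Combining with the identifications above yields the claimed quasi-modularity of $A_{S,n,k}(q)$ of mixed weight at most $2k$. The main obstacle will be verifying the exact combinatorial constants in the $n \in S$ case; the cleanest route is to expand $\vartheta(n\tau,z)/\eta(n\tau)^3$ explicitly as a power series in $z$ and track each term through the Leibniz expansion of $\partial_z^{2w}$.
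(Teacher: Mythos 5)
This statement is quoted from Rose's paper and is not proved in the present paper at all: the text explicitly defers to the proofs of Theorems 1.11 and 1.12 of \cite{R}. So there is no internal proof to compare against; what follows is an assessment of your sketch on its own terms. Your overall strategy --- identify $A_{S,n,k}$ with Taylor coefficients of $\phi_S(n\tau,z)$ at $z=0$ and invoke the quasi-modularity of Taylor coefficients of Jacobi forms --- is indeed the strategy Rose uses, and the final step is exactly Lemma \ref{2.2} of this paper. However, your route through logarithms and Newton's identities is both a detour and imprecise at the key point. The natural expansion variable is not $z^2$ but $(\zeta^{1/2}-\zeta^{-1/2})^2=-4\sin^2(\pi z)$: writing each factor as $\prod_{m>0,\,m\equiv\ell}\bigl(1+\tfrac{4q^m\sin^2(\pi z)}{(1-q^m)^2}\bigr)$ exhibits the product \emph{directly} as the generating function of the elementary symmetric functions $A_{S,n,k}(q)$ in the quantities $q^m/(1-q^m)^2$ --- no power sums needed. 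More importantly, the mixed-weight structure (the ``weight $2w$ part'') arises precisely from re-expanding $\sin^{2k}(\pi z)$ in powers of $z$, and your sketch never confronts this step; as written, your $z^{2j}$-coefficients of the logarithm are not pure power sums, so the triangular inversion you describe does not come out as claimed.

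There are two further concrete problems in the $n\in S$ case. First, $\psi_{1/2}$ is not a legitimate function: $\vartheta_{1/2}(\tau,\tfrac12)=0$ (the terms cancel in pairs), which is exactly why $\phi_S$ is defined as a product over $S\backslash n$ and why this case must be handled separately. The correct replacement for the $\ell=n$ factor is $\vartheta_{1/2}(\tau,z+\tfrac12)$ normalized by its \emph{first} $z$-derivative at $z=0$, which is a multiple of $\eta(\tau)^3$ by Jacobi's derivative formula; this odd, order-one vanishing at $z=0$ is what produces the terms $D^i\eta(n\tau)^3/\eta(n\tau)^3$. Second, and relatedly, applying Leibniz to $\partial_z^{2w}$ of the product as you propose would produce coefficients $\binom{2w}{2i}$, not the $\binom{2w+1}{2i+1}$ appearing in the statement; the latter signal that one must take $2w+1$ derivatives of a product containing one odd factor, with the odd factor absorbing the odd-order derivatives $2i+1$. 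Your plan would therefore fail to reproduce the stated constants without this correction.
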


The key relationship between Jacobi forms and quasi-modular forms is that the Taylor coefficients for the expansion around $z=0$ of a Jacobi form are quasi-modular forms for the same group (see equation (6) on p.~31 of \cite{EZ}). More precisely, we have the following.

\begin{Lemma}[Corollary 2.3 of \cite{R}] \label{2.2}
Given any Jacobi form $\phi(\tau, z)$ of weight $k$ for some group $\Gamma \subseteq \mathrm{SL}_2(\mathbb{Z})$ and any positive integer $m$, the function
\[\left.\left(\frac{\partial}{\partial z}\right)^m\phi(\tau,z) \right|_{z=0}\]
is a quasi-modular form of weight $k+m$ for the same group $\Gamma$. That is, there is a nearly-holomorphic modular form $\Phi^{(m)}(\tau)$ of weight $k+m$ on $\Gamma$ whose constant term with respect to $\frac{1}{y}$ is the function above.
\end{Lemma}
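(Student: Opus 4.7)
The plan is to Taylor-expand $\phi$ in $z$ about $z=0$ and show that each Taylor coefficient can be completed to a nearly-holomorphic modular form of the appropriate weight. Writing $\phi(\tau,z) = \sum_{n \ge 0}\chi_n(\tau)\,z^n$, we have $(\partial/\partial z)^m\phi(\tau,z)|_{z=0} = m!\,\chi_m(\tau)$, so it suffices to produce, for each positive integer $m$, a nearly-holomorphic modular form of weight $k+m$ on $\Gamma$ whose constant term with respect to $1/y$ is $\chi_m(\tau)$.

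First I would derive the transformation law for the $\chi_n$. Let $\mu$ denote the index of $\phi$ (to avoid a notational collision with the differentiation order $m$). Applying the invariance $\phi|_{k,\mu}[\gamma] = \phi$ for $\gamma = \left(\begin{smallmatrix} a & b \\ c & d\end{smallmatrix}\right) \in \Gamma$, substituting $z \mapsto (c\tau+d)z$, expanding the exponential factor $e(\mu c(c\tau+d)z^2)$ as a power series in $z^2$, and comparing coefficients of $z^n$ yields
\[\chi_n(\gamma\tau) = \sum_{j \ge 0}\frac{(2\pi i\mu c)^j}{j!}(c\tau+d)^{k+n-j}\chi_{n-2j}(\tau).\]
The $j=0$ term gives the expected weight-$(k+n)$ transformation; the remaining terms, each proportional to a positive power of $c$ and involving only the earlier Taylor coefficients $\chi_{n-2}, \chi_{n-4}, \ldots$, measure the failure of $\chi_n$ to be modular.

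Next I would construct the nearly-holomorphic completion
\[\tilde\chi_n(\tau) := \sum_{j \ge 0}\frac{(\pi\mu)^j}{j!\,y^j}\,\chi_{n-2j}(\tau)\]
and verify that $\tilde\chi_n(\gamma\tau) = (c\tau+d)^{k+n}\tilde\chi_n(\tau)$. Using the identity $1/\mathrm{Im}(\gamma\tau) = (c\tau+d)^2/y - 2ic(c\tau+d)$ to expand $(|c\tau+d|^2/y)^j$ binomially, and then substituting the transformation law above for $\chi_{n-2j}(\gamma\tau)$, one reorganizes the resulting double sum by the index of the Taylor coefficient ultimately produced. The coefficient of each $\chi_{n-2r}(\tau)$ collapses, after cancellation, to $(c\tau+d)^{k+n}(\pi\mu)^r/(r!\,y^r)$, by virtue of the telescoping identity $\bigl((-2\pi i\mu)+(2\pi i\mu)\bigr)^N = 0$ for $N \ge 1$ applied to the inner binomial sum. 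Verifying this cancellation is the main obstacle, and it is precisely what forces the choice of coefficients $\alpha_j = (\pi\mu)^j/j!$ in the completion.

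Since $\tilde\chi_n$ is manifestly a polynomial in $1/y$ with holomorphic $\tau$-coefficients, it is a nearly-holomorphic modular form of weight $k+n$ on $\Gamma$ (the requisite growth condition at the cusps is inherited from that of $\phi$), and its constant term in $1/y$ is $\chi_n(\tau)$. Setting $\Phi^{(m)}(\tau) := m!\,\tilde\chi_m(\tau)$ then yields the desired nearly-holomorphic modular form of weight $k+m$ on $\Gamma$ whose constant term with respect to $1/y$ is $(\partial/\partial z)^m\phi(\tau,z)|_{z=0}$.
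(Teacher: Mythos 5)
Your proof is correct: the transformation law you derive for the Taylor coefficients $\chi_n$, the completion $\tilde\chi_n(\tau) = \sum_{j}\frac{(\pi\mu)^j}{j!\,y^j}\chi_{n-2j}(\tau)$, and the cancellation forced by $(-2\pi i\mu c + 2\pi i\mu c)^N = 0$ all check out, and this is precisely the standard Eichler--Zagier argument (equation (6) on p.~31 of \cite{EZ}) that the paper cites for this lemma rather than proving it. In effect you have supplied the omitted proof of the quoted result, following essentially the same route as the cited source.
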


For a symmetric set $S$ and positive integer $w$, we define $\Phi_S^{(2w)}(\tau)$ to be the nearly-holomorphic modular form whose constant term with respect to $\frac{1}{y}$ is
\[\left(\frac{\partial}{\partial z}\right)^{2w}\!\!\!\!\!\!\left.\begin{array}{c} \\[2ex] \end{array} \phi_S(\tau,z)\right|_{z=0}.\]
To prove Rose's conjecture, we need to show that $\Phi_S^{(2w)}(n\tau)$ is modular for $\Gamma_1(n)$. 

\section{Proof of conjecture} \label{pf}

By Lemma \ref{2.2}, the modularity properties of $\Phi_S^{(2w)}(\tau)$ are determined by those of the Jacobi form $\phi_S(\tau, z)$.
We first determine congruence conditions on matrices which are sufficient for them to fix $\phi_S(\tau,z)$, and hence $\Phi_S^{(2w)}(\tau)$.
Recall that for a positive integer $n$, the \textit{principal congruence subgroup} of level $n$ is defined by
\[\Gamma(n) := \left\{\left(\begin{smallmatrix} a & b \\ c & d \end{smallmatrix}\right) \in \mathrm{SL}_2(\mathbb{Z}) : \left(\begin{smallmatrix} a & b \\ c & d \end{smallmatrix}\right) \equiv \left(\begin{smallmatrix} 1 & 0 \\ 0 & 1 \end{smallmatrix}\right)\!\!\!\! \pmod n \right\},\]
and the congruence subgroup $\Gamma_1(n)$ is defined by
\[\Gamma_1(n):=\left\{\left(\begin{smallmatrix} a & b \\ c & d \end{smallmatrix}\right) \in \mathrm{SL}_2(\mathbb{Z}) : \left(\begin{smallmatrix} a & b \\ c & d \end{smallmatrix}\right) \equiv \left(\begin{smallmatrix} 1 & * \\ 0 & 1 \end{smallmatrix}\right)\!\!\!\! \pmod n \right\}.\]
Throughout this section, fix a symmetric set $S \subset \{1, \ldots, n\}$ of residues modulo $n$.

\begin{Lemma} \label{3.1}
Let $\left(\begin{smallmatrix}a & b \\ c&d \end{smallmatrix}\right) \in \Gamma(2)$ with $a \equiv 1 \pmod n$ and $b \equiv 0 \pmod n.$
Then
\[\left(\phi_S|\left[\left(\begin{smallmatrix} a & b \\ c & d \end{smallmatrix}\right)\right]\right)(\tau,z) = \phi_S(\tau, z).\]
In particular, for any positive integer $w$, we have
\[\left(\left.\Phi_S^{(2w)}\right|\left(\begin{smallmatrix} a & b \\ c & d \end{smallmatrix}\right)\right)(\tau) =\Phi_S^{(2w)}(\tau).\]
\end{Lemma}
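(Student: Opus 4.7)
By Lemma~\ref{2.2}, it suffices to prove the Jacobi-form statement $\phi_S\vert[M]=\phi_S$, after which the claimed invariance of $\Phi_S^{(2w)}$ follows. Because the Jacobi slash distributes over products and $\phi_S=\prod_{\ell\in S\setminus\{n\}}\psi_{\alpha(n,\ell)}$ with $\psi_r(\tau,z)=\vartheta_r(\tau,z+\tfrac12)/\vartheta_r(\tau,\tfrac12)$, my plan is to show that \emph{each} factor $\psi_r$, with $r=\alpha(n,\ell)$, is individually fixed by $[M]$, by checking that the numerator and the denominator of $\psi_r$ transform under $M$ with matching multipliers.

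The starting point is a clean transformation rule for $\vartheta_r$. Since $\Gamma(2)\subset\langle T^2,S\rangle$, we have $\vartheta\vert[M]=\chi(M)\,\vartheta$ for some $8$th root of unity $\chi(M)$. The Jacobi group law identity $[(r,0)][M]=[M][(ar,br)]$ then gives $\vartheta_r\vert[M]=\chi(M)\,\vartheta\vert[(ar,br)]$. The hypotheses, together with $M\in\Gamma(2)$, force $br\in\mathbb{Z}$ (from $n\mid b$ and $2\mid b$) and $ar\equiv r\pmod{\mathbb{Z}}$ (from $a\equiv1\pmod n$ and $a$ odd), so in particular $\vartheta_{ar}=\vartheta_r$; the classical quasi-periodicity of $\vartheta$ then simplifies the right-hand side to $\vartheta_r\vert[M]=\chi(M)\,e(abr^2/2)\,\vartheta_r$.

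Next, let $u=c/2\in\mathbb{Z}$ and $v=(d-1)/2\in\mathbb{Z}$. Applying the same group-law trick to $[(0,\tfrac12)][M]=[M][(u,v+\tfrac12)]$ and using the quasi-periodicity identity $\vartheta_r(\tau,z+u\tau+v)=e(rv)\,q^{-u^2/2}\,e(-uz)\,\vartheta_r(\tau,z)$ shows that the numerator $h_r(\tau,z):=\vartheta_r(\tau,z+\tfrac12)$ satisfies
\[
h_r\vert[M]\;=\;\chi(M)\,e\!\left(\tfrac12 abr^2+rv+\tfrac12 uv-\tfrac14 u\right)h_r.
\]
For the denominator $f_r(\tau):=\vartheta_r(\tau,\tfrac12)$, viewed as a weight-$\tfrac12$ modular form, I would specialize the Jacobi transformation of $\vartheta_r$ at $z=(c\tau+d)/2$, so that $z/(c\tau+d)=\tfrac12$; simplifying the resulting Gaussian phase $e(cd/8)$ via the arithmetic identity $cd/8=uv/2+u/4$ (immediate from $c=2u$, $d=2v+1$) yields
\[
f_r\vert_{1/2}M\;=\;\chi(M)\,e\!\left(\tfrac12 abr^2+rv+\tfrac12 uv-\tfrac14 u\right)f_r,
\]
exactly the same multiplier as for $h_r$. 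Thus the multipliers cancel in the ratio $\psi_r=h_r/f_r$, giving $\psi_r\vert[M]=\psi_r$ and hence $\phi_S\vert[M]=\phi_S$; Lemma~\ref{2.2} then completes the proof. The principal obstacle is this final multiplier bookkeeping: seeing that the Gaussian phase $e(cd/8)$ produced by specializing $z$ at $(c\tau+d)/2$ combines with the quasi-periodicity phase $e(-u/2)$ in precisely the right way to reproduce the multiplier that the shift $[(u,v+\tfrac12)]$ generates in the numerator.
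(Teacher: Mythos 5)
Your proposal is correct, but it takes a genuinely different route from the paper's proof. The paper never touches the theta multiplier at all: it works with the weight-$0$, index-$\tfrac12$ Jacobi form $\psi=\vartheta(\tau,z)/\vartheta(\tau,0)$, which is invariant under $\Gamma(2)\subset\langle T^2,S\rangle$ with trivial multiplier, and then uses the Jacobi group law to commute the translation $[(\alpha(n,\ell),\tfrac12)]$ past $[M]$, producing an integral translation $[X]$ with $X=(\alpha(n,\ell)(a-1)+\tfrac c2,\ \alpha(n,\ell)b+\tfrac{d-1}2)\in\mathbb{Z}^2$ that is then absorbed by the elliptic transformation law. You instead transform the numerator $h_r(\tau,z)=\vartheta_r(\tau,z+\tfrac12)$ and the denominator $f_r(\tau)=\vartheta_r(\tau,\tfrac12)$ of Rose's $\psi_r$ separately and match their multipliers in closed form; I checked your key identities ($\vartheta_r\vert[M]=\chi(M)e(abr^2/2)\vartheta_r$ under the stated congruences, the quasi-periodicity step, and the specialization at $z=(c\tau+d)/2$ with $cd/8=uv/2+u/4$), and the common multiplier $\chi(M)\,e(\tfrac12abr^2+rv+\tfrac12uv-\tfrac14u)$ is correct, so the cancellation in the ratio goes through. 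What each approach buys: the paper's argument is much shorter and avoids all explicit phase bookkeeping, but at index $\tfrac12$ it quietly relies on the $\xi$-component of the group-law decomposition being trivial and on $\psi\vert[X]=\psi$ for integral $X$, both of which carry signs and phases that are not entirely innocuous (note also that $\psi\vert[(r,\tfrac12)]$ and $\vartheta_r(\tau,z+\tfrac12)/\vartheta_r(\tau,\tfrac12)$ differ by a $\tau$-dependent factor, so the two normalizations of $\psi_r$ do not even transform identically under $[M]$). Your computation is longer and requires the theta-constant transformation at the $2$-torsion point, but because it works with Rose's actual ratio and lets the $8$th-root-of-unity multiplier $\chi(M)$ and the $(c\tau+d)^{1/2}$ branch cancel between numerator and denominator, it is robust to exactly these normalization subtleties. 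The final reduction to $\Phi_S^{(2w)}$ via Lemma~\ref{2.2} is the same in both arguments.
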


\begin{proof}
First note that
$\Gamma(2) \subset \left\langle \left(\begin{smallmatrix} 1 & 2 \\ 0 & 1 \end{smallmatrix}\right), 
 \left(\begin{smallmatrix} 0 & -1 \\ 1 & 0 \end{smallmatrix}\right)\right\rangle$, so $\left(\psi|\left[\left(\begin{smallmatrix} a & b \\ c & d \end{smallmatrix}\right)\right]\right)(\tau, z) = \psi(\tau,z)$. Note also that the congruence conditions imposed on $a, b, c, d$ ensure that
\[ \left(\alpha(n, \ell)(a-1)+\tfrac{c}{2},\alpha(n, \ell)b+\tfrac{d-1}{2}\right) \in \mathbb{Z}^2.\]
 Now for each $\ell \in S$, using the group law in \eqref{gplaw}, we have
 \begin{align*}
\left( \psi_{\alpha(n, \ell)}|\left[\left(\begin{smallmatrix} a&b\\c&d\end{smallmatrix}\right)\right]\right)(\tau, z) &= \left(\psi|\left[I, \left(\alpha(n, \ell), \tfrac{1}{2}\right),1\right]\left[\left(\begin{smallmatrix} a&b\\c&d\end{smallmatrix}\right),(0,0),1\right]\right)(\tau,z) \\
 &\!\!\!\!\!\!\!\!\!\!\!\!\!\!\!\!\!\!\!\!=\left(\psi|\left[\left(\begin{smallmatrix} a&b\\c&d\end{smallmatrix}\right),\left(\alpha(n, \ell)a+\tfrac{c}{2},\alpha(n, \ell)b+\tfrac{d}{2}\right),1\right]\right)(\tau,z) \\
  &\!\!\!\!\!\!\!\!\!\!\!\!\!\!\!\!\!\!\!\!=\left(\psi|\left[\left(\begin{smallmatrix} a&b\\c&d\end{smallmatrix}\right)\right]\left[\left(\alpha(n, \ell)(a-1)+\tfrac{c}{2},\alpha(n, \ell)b+\tfrac{d-1}{2}\right)\right]\left[\left(\alpha(n, \ell), \tfrac{1}{2}\right)\right]\right)(\tau,z) \\
 &\!\!\!\!\!\!\!\!\!\!\!\!\!\!\!\!\!\!\!\!= \psi_{\alpha(n, \ell)}(\tau, z).
 \end{align*}
 Hence, their product is also invariant under $\left(\begin{smallmatrix} a & b \\ c & d \end{smallmatrix}\right)$. The second claim now follows from Lemma \ref{2.2}
\end{proof}

We show that $\Phi_S^{(2w)}(\tau, z)$ is modular with respect to two particular other matrices.
\begin{Lemma} \label{2}
We have
$\left(\phi_S |\left[\left(\begin{smallmatrix} 1 & 0 \\ 1 & 1 \end{smallmatrix}\right)\right]\right) (\tau, z)= \phi_S(\tau, z)$ and
$\left(\phi_S |\left[\left(\begin{smallmatrix} 1 & n \\ 0 & 1 \end{smallmatrix}\right)\right]\right) (\tau, z)= \phi_S(\tau, z)$.
In particular, for any positive integer $w$, we have
\[\left(\left.\Phi_S^{(2w)}\right|\left(\begin{smallmatrix} 1 & 0 \\ 1 & 1 \end{smallmatrix}\right)\right)(\tau) =\Phi_S^{(2w)}(\tau) \qquad \text{and} \qquad
\left(\left.\Phi_S^{(2w)}\right|\left(\begin{smallmatrix} 1 & n \\ 0 & 1 \end{smallmatrix}\right)\right)(\tau) =\Phi_S^{(2w)}(\tau).\]
\end{Lemma}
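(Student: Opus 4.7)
The plan is to show that each factor $\psi_{\alpha(n,\ell)}$ in the product $\phi_S = \prod_{\ell \in S \backslash n} \psi_{\alpha(n,\ell)}$ is individually invariant under the given matrix; invariance of $\phi_S$ then follows by multiplicativity of the slash operator, and the statement for $\Phi_S^{(2w)}$ is immediate from Lemma~\ref{2.2}.

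For $\left(\begin{smallmatrix}1 & n \\ 0 & 1\end{smallmatrix}\right)$ the slash operator reduces to $\tau \mapsto \tau + n$ with no automorphy factor. I would verify directly that $\vartheta_r(\tau + n, w) = e^{\pi i n r^2}\,\vartheta_r(\tau, w)$ for $r = \alpha(n, \ell) = \ell/n - \tfrac{1}{2}$: writing each summation index as $m = k + r$ with $k \in \mathbb{Z}$, one finds $n(m^2 - r^2) = n k(k-1) + 2 k\ell$, which is an even integer, so $e^{\pi i n m^2} = e^{\pi i n r^2}$ for every summand $m$. This common prefactor cancels in the ratio $\psi_{\alpha(n,\ell)}(\tau, z) = \vartheta_r(\tau, z + \tfrac{1}{2})/\vartheta_r(\tau, \tfrac{1}{2})$, giving $\psi_{\alpha(n,\ell)}(\tau + n, z) = \psi_{\alpha(n,\ell)}(\tau, z)$.

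For $\left(\begin{smallmatrix}1 & 0 \\ 1 & 1\end{smallmatrix}\right)$, I would use the factorization $\left(\begin{smallmatrix}1 & 0 \\ 1 & 1\end{smallmatrix}\right) = \left(\begin{smallmatrix}0 & -1 \\ 1 & 0\end{smallmatrix}\right)\left(\begin{smallmatrix}1 & -1 \\ 0 & 1\end{smallmatrix}\right)\left(\begin{smallmatrix}0 & 1 \\ -1 & 0\end{smallmatrix}\right)$ together with the invariance $\vartheta|\left[\left(\begin{smallmatrix}0 & -1 \\ 1 & 0\end{smallmatrix}\right)\right] = \vartheta$ and the relation $\vartheta|\left[\left(\begin{smallmatrix}1 & 1 \\ 0 & 1\end{smallmatrix}\right)\right] = \vartheta|[(0, \tfrac{1}{2})]$ to compute the transformation of $\vartheta_r$ under $\left(\begin{smallmatrix}1 & 0 \\ 1 & 1\end{smallmatrix}\right)$ via Jacobi-group composition. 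After invoking $\vartheta_{r+1} = \vartheta_r$ (the subscript is defined only modulo $1$) and the quasi-periodicity of $\vartheta$ in its elliptic variable, this yields an identity of the shape
$\vartheta_r\!\left(\tfrac{\tau}{\tau+1},\, \tfrac{z}{\tau+1} + \tfrac{1}{2}\right) = (\tau+1)^{1/2} \cdot e\!\left(\tfrac{z^2}{2(\tau+1)}\right) \cdot \kappa \cdot \vartheta_r(\tau, z + \tfrac{1}{2})$
for some fixed $8$th-root of unity $\kappa$ depending only on $r$. Specializing to $z = 0$ gives the companion identity for the denominator; forming the ratio $\psi_r = \vartheta_r(\cdot, z+\tfrac{1}{2})/\vartheta_r(\cdot, \tfrac{1}{2})$ causes $(\tau+1)^{1/2}$ and $\kappa$ to cancel between numerator and denominator, leaving $e\!\left(\tfrac{z^2}{2(\tau+1)}\right) \psi_r(\tau, z)$; this is precisely what the slash operator absorbs, yielding $\psi_{\alpha(n,\ell)}\big|\left[\left(\begin{smallmatrix}1 & 0 \\ 1 & 1\end{smallmatrix}\right)\right] = \psi_{\alpha(n,\ell)}$.

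The main obstacle is the careful bookkeeping of the roots of unity and half-integer $\vartheta$-index shifts that arise in transforming $\vartheta_r$ under $\left(\begin{smallmatrix}1 & 0 \\ 1 & 1\end{smallmatrix}\right)$; the essential observation is that these extraneous factors depend only on $\tau$ and the fixed index $r$, not on $z$, so they appear identically in the numerator and denominator of the ratio defining $\psi_r$ and cancel.
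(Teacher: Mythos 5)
Your proposal is correct; it overlaps substantially with the paper's proof but differs in two ways worth noting. For $\left(\begin{smallmatrix}1&n\\0&1\end{smallmatrix}\right)$ you compute directly on the $q$-series of $\vartheta_r$: the identity $n(m^2-r^2)=nk(k-1)+2k\ell\in 2\mathbb{Z}$ for $m=k+r$ is right, so $e^{\pi i n m^2}=e^{\pi i n r^2}$ is constant over the sum and cancels in the ratio defining $\psi_r$; this is more elementary than the paper's route, which stays inside the Jacobi group and rewrites $\psi|\left[\left(\begin{smallmatrix}1&n\\0&1\end{smallmatrix}\right)\right]$ as $\psi|\left[\left(0,\tfrac{n}{2}\right)\right]$ before commuting translations. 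For $\left(\begin{smallmatrix}1&0\\1&1\end{smallmatrix}\right)$ you use the same factorization into $\left(\begin{smallmatrix}0&-1\\1&0\end{smallmatrix}\right)^{\pm 1}$ and $\left(\begin{smallmatrix}1&-1\\0&1\end{smallmatrix}\right)$ and the same two inputs (the $S$-invariance of $\vartheta$ and the relation \eqref{tr}), so this half is essentially the paper's argument; the difference is organizational: the paper carries the computation out inside the Jacobi group, showing that the surplus translation $\left(\tfrac{1}{2},0\right)$ produced by the group law is absorbed, while you transform $\vartheta_r$ itself and appeal to the fact that all extraneous factors ($\kappa$ and $(\tau+1)^{1/2}$) are independent of $z$ and cancel between the numerator and denominator of $\psi_r$. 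That cancellation principle is sound and genuinely lightens the root-of-unity bookkeeping, but it does not excuse you from the one substantive verification: a priori the matrix $\left(\begin{smallmatrix}1&0\\1&1\end{smallmatrix}\right)$ shifts the characteristic $\left(r,\tfrac{1}{2}\right)$ to $\left(r+\tfrac{1}{2},\tfrac{1}{2}\right)$, so you must check that the transformed function is again $\vartheta_r$ with the same $r$ modulo $\mathbb{Z}$, rather than some $\vartheta_{r'}$ with $r'\not\equiv r$; it is precisely the translation $\left(-\tfrac{1}{2},0\right)$ that falls out of $\vartheta$ under the conjugated relation \eqref{tr} which undoes this shift, and writing that out is the content of the paper's chain of equalities. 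Once that step is made explicit, your argument is complete.
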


\begin{proof}
First note that
$\left(\begin{smallmatrix} 1 & 0 \\ 1 & 1 \end{smallmatrix}\right) = 
\left(\begin{smallmatrix} 0 & 1 \\ -1 & 0 \end{smallmatrix}\right) 
\left(\begin{smallmatrix} 1 & -1 \\ 0 & 1 \end{smallmatrix}\right) 
\left(\begin{smallmatrix} 0 & -1 \\ 1 & 0 \end{smallmatrix}\right). $
Now we use the group law as in the previous lemma, together with \eqref{tr}, to see that for each $\ell \in S$,
\begin{align*}
\left(\psi_{\alpha(n, \ell)} |\left[\left(\begin{smallmatrix} 1 & 0 \\ 1 & 1 \end{smallmatrix}\right)\right]\right) (\tau, z)
&= \left(\psi|\left[\left(\begin{smallmatrix} 1 & 0 \\ 1 & 1 \end{smallmatrix}\right)\right]\left[\left(\tfrac{1}{2},0\right)\right]\left[\left(\alpha(n, \ell), \tfrac{1}{2}\right)\right] \right)(\tau,z) \\
&=\left(\psi|\left[\left(\begin{smallmatrix} 1 & -1 \\ 0 & 1 \end{smallmatrix}\right)\right]
\left[\left(\begin{smallmatrix} 0 & -1 \\ 1 & 0 \end{smallmatrix}\right)\right]
\left[\left(\tfrac{1}{2},0\right)\right]\left[\left(\alpha(n, \ell), \tfrac{1}{2}\right)\right] \right)(\tau,z) \\
&=\left(\psi|\left[\left(0, -\tfrac{1}{2}\right)\right]
\left[\left(\begin{smallmatrix} 0 & -1 \\ 1 & 0 \end{smallmatrix}\right)\right]
\left[\left(\tfrac{1}{2},0\right)\right]\left[\left(\alpha(n, \ell), \tfrac{1}{2}\right)\right] \right)(\tau,z) \\
&=\left(\psi |\left[\left(\begin{smallmatrix} 0 & -1 \\ 1 & 0 \end{smallmatrix}\right)\right]
\left[\left(-\tfrac{1}{2},0\right)\right]
\left[\left(\tfrac{1}{2},0\right)\right]\left[\left(\alpha(n, \ell), \tfrac{1}{2}\right)\right] \right)(\tau,z) \\
&=\psi_{\alpha(n, \ell)}(\tau, z).
\intertext{
Similarly, we have}
\left(\psi_{\alpha(n, \ell)} |\left[\left(\begin{smallmatrix} 1 & n \\ 0 & 1 \end{smallmatrix}\right)\right]\right) (\tau, z)
&= \left(\psi|\left[\left(\begin{smallmatrix} 1 & n \\ 0 & 1 \end{smallmatrix}\right)\right]\left[\left(0,\alpha(n, \ell)n\right)\right]\left[\left(\alpha(n, \ell), \tfrac{1}{2}\right) \right]\right)(\tau,z) \\
&= \left(\psi|\left[\left(0,\tfrac{n}{2}\right)\right] \left[\left(0,\alpha(n, \ell)n\right)\right]\left[\left(\alpha(n, \ell), \tfrac{1}{2}\right)\right] \right)(\tau,z) \\
&=\left(\psi|\left[\left(0,\left(\alpha(n, \ell)+\tfrac{1}{2}\right)n\right)\right]\left[\left(\alpha(n, \ell), \tfrac{1}{2}\right)\right] \right)(\tau,z) \\
&=\psi_{\alpha(n, \ell)}(\tau, z).
\end{align*}
In either case, the product is also fixed by the specified matrix.
\end{proof}

We can now prove the desired modularity property for $\Phi_S^{(2w)}(n\tau)$.

\begin{Lemma} \label{tada}
The nearly-holomorphic modular form $\Phi_S^{(2w)}(n\tau)$ is modular for $\Gamma_1(n)$.
\end{Lemma}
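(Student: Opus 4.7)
The plan is to pull the question back from $\Phi_S^{(2w)}(n\tau)$ to $\Phi_S^{(2w)}(\tau)$ itself and then invoke the invariances already established in Lemmas~\ref{3.1} and~\ref{2}. For $\gamma = \left(\begin{smallmatrix} a & b \\ c & d\end{smallmatrix}\right) \in \Gamma_1(n)$, the substitution $\tau \mapsto n\tau$ conjugates the weight-$k$ slash action by $W := \left(\begin{smallmatrix} n & 0 \\ 0 & 1 \end{smallmatrix}\right)$; since $n \mid c$, a short direct computation gives
\[
\left.\Phi_S^{(2w)}(n\tau)\right|_k \gamma \;=\; \left(\left.\Phi_S^{(2w)}\right|_k \gamma'\right)(n\tau), \qquad \gamma' \;:=\; W\gamma W^{-1} \;=\; \left(\begin{smallmatrix} a & nb \\ c/n & d\end{smallmatrix}\right) \in \mathrm{SL}_2(\mathbb{Z}).
\]
Thus it suffices to verify $\left.\Phi_S^{(2w)}\right|_k \gamma' = \Phi_S^{(2w)}$ for every $\gamma'$ in the conjugate congruence subgroup $\Gamma' := W\Gamma_1(n)W^{-1} = \bigl\{M \in \mathrm{SL}_2(\mathbb{Z}) : M \equiv \left(\begin{smallmatrix} 1 & 0 \\ \ast & 1 \end{smallmatrix}\right)\!\!\!\!\pmod n\bigr\}$.

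By Lemmas~\ref{3.1} and~\ref{2}, the stabilizer of $\Phi_S^{(2w)}$ already contains $T := \left(\begin{smallmatrix} 1 & 0 \\ 1 & 1 \end{smallmatrix}\right)$, $S_n := \left(\begin{smallmatrix} 1 & n \\ 0 & 1 \end{smallmatrix}\right)$, and the set $H_1 := \{\gamma \in \Gamma(2) : a \equiv 1,\ b \equiv 0 \!\!\!\!\pmod n\}$, all of which lie in $\Gamma'$. The problem thus reduces to the purely group-theoretic claim that $\Gamma' = \langle H_1, T, S_n \rangle$.

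For this last equality I would argue by reduction modulo $2n$. Since $\Gamma(2n) \subseteq \Gamma(2)$ and the defining congruences of $\Gamma(2n)$ imply the mod-$n$ conditions required by $H_1$, we have $\Gamma(2n) \subseteq H_1$; in particular, $\langle H_1, T, S_n \rangle$ already contains the kernel of the reduction $\pi_{2n} : \mathrm{SL}_2(\mathbb{Z}) \to \mathrm{SL}_2(\mathbb{Z}/2n\mathbb{Z})$. It then suffices to show that $\pi_{2n}(H_1)$, $\pi_{2n}(T)$, and $\pi_{2n}(S_n)$ together generate the finite group $\pi_{2n}(\Gamma')$. The main obstacle is this finite verification, which requires a short case analysis depending on the parity of $n$: one must check that products of $\pi_{2n}(T)$ and $\pi_{2n}(S_n)$, adjusted by an appropriate element of $\pi_{2n}(H_1)$, realize every class in $\pi_{2n}(\Gamma')$.
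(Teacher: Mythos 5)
Your setup is sound and matches the paper's: the conjugation by $W$ is computed correctly, the three invariances you import from Lemmas \ref{3.1} and \ref{2} are the right ones, and the reduction to the group-theoretic statement $\Gamma' = \langle H_1, T, S_n\rangle$ is exactly where the real content of the lemma lies. The problem is that you then stop: the ``finite verification'' in $\mathrm{SL}_2(\mathbb{Z}/2n\mathbb{Z})$ that you yourself flag as ``the main obstacle'' is precisely the step that constitutes the proof, and it is left unexecuted. As written, your argument only shows that the lemma would follow from an unproven generation claim. The paper closes this step by an index count instead: the subgroup of $\Gamma_1(n)$ under which $\Phi_S^{(2w)}(n\tau)$ is known to be invariant properly contains $\Gamma_1(2n)$ when $n$ is odd, and contains $\Gamma_0(2n)\cap\Gamma_1(n)$ together with $\left(\begin{smallmatrix}1&0\\n&1\end{smallmatrix}\right)$ when $n$ is even; the indices $[\Gamma_1(n):\Gamma_1(2n)]=3$ resp.\ $4$ then force this subgroup to be all of $\Gamma_1(n)$.

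For what it is worth, your generation claim is true, and it can be closed more cheaply than your mod-$2n$ plan suggests. Since $H_1 = \Gamma' \cap \Gamma(2)$ is the kernel of reduction modulo $2$ restricted to $\Gamma'$ (note that $b\equiv 0\pmod n$ and $ad-bc=1$ force $d\equiv 1\pmod n$ automatically, so $H_1$ is indeed $\Gamma'\cap\Gamma(2)$), it is normal in $\Gamma'$ with $\Gamma'/H_1 \cong \pi_2(\Gamma') \leq \mathrm{SL}_2(\mathbb{Z}/2\mathbb{Z})$, where $\pi_2$ denotes reduction modulo $2$. For odd $n$ one has $\pi_2(\Gamma') = \mathrm{SL}_2(\mathbb{Z}/2\mathbb{Z})$, which is generated by $\pi_2(T)$ and $\pi_2(S_n) = \left(\begin{smallmatrix}1&1\\0&1\end{smallmatrix}\right)$; for even $n$, $\pi_2(\Gamma')$ has order $2$ and is generated by $\pi_2(T)$ alone. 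Either way $\langle H_1, T, S_n\rangle$ contains the kernel of $\pi_2|_{\Gamma'}$ and surjects onto its image, hence equals $\Gamma'$. Had you included this (or actually carried out your own mod-$2n$ case analysis), the proof would be complete and essentially parallel to the paper's, with a reduction-mod-$2$ argument in place of the paper's index computation.
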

\begin{proof}
When we apply the weight-$2w$ slash operator to $\Phi_S^{(2w)}(n\tau)$, we obtain
\[(c\tau+d)^{-2w}\Phi_S^{(2w)}\left(n\cdot \tfrac{a\tau+b}{c\tau+d}\right)
=\left(\left.\Phi_S^{(2w)}\right|\left(\begin{smallmatrix} a & bn \\ c/n & d \end{smallmatrix}\right)\right)(n\tau).
\]
Thus to show that the function $\Phi_S^{(2w)}(n\tau)$ is fixed by a matrix $\left(\begin{smallmatrix} a & b \\ c & d \end{smallmatrix}\right) \in \Gamma_1(n)$, it suffices to check that $\Phi_S^{(2w)}(\tau)$ is fixed by $\left(\begin{smallmatrix} a & nb \\ c/n & d \end{smallmatrix}\right)$.
For any $\left(\begin{smallmatrix} a & b \\ c & d \end{smallmatrix}\right) \in \Gamma(2n)$, we have that $\left(\begin{smallmatrix} a & nb \\ c/n & d \end{smallmatrix}\right)$ satisfies the conditions of Lemma \ref{3.1}, and so fixes $\Phi_S^{(2w)}(\tau)$. Furthermore, when $n$ is even, $\left(\begin{smallmatrix} a & nb \\ c/n & d \end{smallmatrix}\right)$ satisfies these conditions for any
$\left(\begin{smallmatrix} a & b \\ c & d \end{smallmatrix}\right) \in \Gamma_0(2n) \cap \Gamma_1(n)$.
 In addition, Lemma \ref{2} implies that $\Phi_S^{(2w)}(n\tau)$ is fixed by $\left(\begin{smallmatrix} 1 & 0 \\ n & 1 \end{smallmatrix}\right)$ and $\left(\begin{smallmatrix} 1 & 1 \\ 0 & 1 \end{smallmatrix}\right)$. We now consider the cases when $n$ is odd and $n$ is even separately.

When $n$ is odd, we have shown that $\Phi_S^{(2w)}(n\tau)$ is invariant under the group generated by $\Gamma(2n) \cup\left\{\left(\begin{smallmatrix} 1 & 0 \\ n & 1 \end{smallmatrix}\right), \left(\begin{smallmatrix} 1 & 1 \\ 0 & 1 \end{smallmatrix}\right)\right\}$. This group is contained in $\Gamma_1(n)$ and properly contains $\Gamma_1(2n)$. For odd $n$, the index of $\Gamma_1(2n)$ in $\Gamma_1(n)$ is $3$, as can readily be computed from the formula in Proposition 1.7 of \cite{O}.
Therefore, the group in question must be all of $\Gamma_1(n)$.

When $n$ is even, we have shown that $\Phi_S^{(2w)}(n\tau)$ is invariant under the group generated by
$\Gamma_0(2n) \cap \Gamma_1(n)$ together with $\left(\begin{smallmatrix} 1 & 0 \\ n & 1 \end{smallmatrix}\right)$. In this case, the index of $\Gamma_1(2n)$ in $\Gamma_1(n)$ is $4$, so since $\Gamma_0(2n) \cap \Gamma_1(n)$ properly contains $\Gamma_1(2n)$ 
it has index at most $2$ in $\Gamma_1(n)$. Thus, the group it generates with $\left(\begin{smallmatrix} 1 & 0 \\ n & 1 \end{smallmatrix}\right) \notin \Gamma_0(2n) \cap \Gamma_1(n)$ is all of $\Gamma_1(n)$.
\end{proof}

Putting this together, we prove Rose's conjecture.

\begin{proof}[Proof of Theorem \ref{con}]
The quasi-modular form
\[\left.\left(\frac{\partial}{\partial z}\right)^{2w}\phi_S(n\tau,z)\right|_{z=0}\]
appearing in Rose's theorem
is the constant term with respect to $\frac{1}{y}$ of the nearly-holomorphic modular form $\Phi_S^{(2w)}(n\tau)$. Thus, by Lemma \ref{tada} these forms are quasi-modular for $\Gamma_1(n)$.

Finally, when $n \in S$, we also need to know that the term involving the Dedekind eta-function is quasi-modular for $\Gamma_1(n)$. It is well-known that derivatives of modular forms are quasi-modular for the same group (see Section 5 of \cite{123}). We can view the Dedekind eta-function as a modular form of weight $\frac{1}{2}$ on $\mathrm{SL}_2(\mathbb{Z})$ with a multiplier system. Once we take the quotient, the additional multiplier cancels and we find that with the change of variable $\tau \mapsto n\tau$, this term is quasi-modular for $\Gamma_1(n)$.

Thus, $A_{S,n,k}(q)$ is a mixed-weight quasi-modular form for $\Gamma_1(n)$.
\end{proof}

\end{document}